\DeclareMathOperator{\hol}{Hol}
\def\eqref#1{$(\ref{#1})$}
\newenvironment{proof}{\noindent {\em {Proof}}.}{$\square$
\medskip}
\newtheorem{theorem}{Theorem}[section]
\newtheorem{corollary}[theorem]{Corollary}
\newtheorem{lemma}[theorem]{Lemma}
\newtheorem{remark}[theorem]{Remark}
\newtheorem{proposition}[theorem]{Proposition}
\newtheorem{example}[theorem]{Example}
\newtheorem{question}[theorem]{Question}
\newtheorem{problem}[theorem]{Problem}
\newcommand{\pdisk}{\mathbb{D}^n}
\newcommand{\disk}{\mathbb{D}}
\newcommand{\mua}{\mu_\alpha}
\newcommand{\bt}[1]{\begin{theorem}\label{#1}}
\newcommand{\bc}[1]{\begin{corollary}\label{#1}}
\newcommand{\bl}[1]{\begin{lemma}\label{#1}}
\newcommand{\bp}[1]{\begin{proposition}\label{#1}}
\newcommand{\be}[1]{\begin{example}\rm\label{#1}}
\newcommand{\bq}[1]{\begin{question}\rm\label{#1}}
\newcommand{\bprob}[1]{\begin{problem}\rm\label{#1}}
\newcommand{\beq}[1]{\begin{eqnarray}\label{#1}}
\newcommand{\br}[1]{\begin{remark}\rm\label{#1}}
\newcommand{\el}{\end{lemma}}
\newcommand{\ep}{\end{proposition}}
\newcommand{\ee}{\end{example}}
\newcommand{\eq}{\end{question}}
\newcommand{\eprob}{\end{problem}}
\newcommand{\eeq}{\end{eqnarray}}
\newcommand{\ed}{\end{definition}}
\newcommand{\et}{\end{theorem}}
\newcommand{\ec}{\end{corollary}}
\newcommand{\er}{\end{remark}}
\title{Approximation in weighted holomorphic Besov spaces in $\mathbb{C}^n$}
\author{Ali Abkar
\vspace{0.5in}\\
Department of Pure Mathematics, Faculty of Science,\\
Imam Khomeini International University, Qazvin 34149, Iran\\
\\
\\
\texttt{Email:~abkar@sci.ikiu.ac.ir}\\
 }
\date{}
\begin{document}
\maketitle \textbf{Abstract.} We study certain weighted Bergman and weighted Besov spaces of holomorphic functions in the polydisk and in the unit ball. We seek Mergelyan-type conditions on the non-radial weight function to guarantee that the dilations of a given function tend to the same function in norm; in particular, we seek conditions on the non-radial weights to ensure that the
analytic polynomials are dense in the space.\\

\noindent\textbf{Keywords}: {Bergman space, analytic Besov space, dilation, non-radial weight, angular weight}\\

\noindent \textbf{MSC2020}: 46E15, 46E20, 30H25, 32A36\\
\textbf{}{}

\section{Introduction}

In the study of Banach spaces of analytic functions on sub-domains of $\mathbb{C}^n$, an important question is to know whether the polynomials are dense in the space. In the complex plane, the
first result on the approximation by polynomials in the space of $p$-th power area integrable functions on simply connected domains was obtained by Torsten Carleman in his 1923 paper \cite{car}. Indeed, Carleman approximated functions in the Bergman space of the domain, but at the time the phrase "Bergman space" was not yet coined.
Carleman's result was then extended to more general regions by O.J. Farrell (see \cite{far1}, \cite{far2}).
Farrell proved that for each $p$-th power area integrable function $f$, there is a sequence of polynomials $p_n$ such that $p_n\to f$ in norm.
We should remark that the Taylor polynomials of a given function do not necessarily approximate the function in norm (see \cite{pzz}, and \cite{zhu}). Instead, one tries to show that the dilations $f_r(z)=f(rz)$ for $0\le r<1$, converge to $f$ in norm. It is easily understood that the dilations $f_r$ that are defined on bigger domains are more well-behaved and tractable than the function $f$ itself. 
\par
In this paper we are concerned with approximation in weighted Bergman and weighted Besov spaces in several complex variables.
We mean by the polydisk the subset
$$\mathbb{D}^n=\{z=(z_1,...,z_n)\in\mathbb{C}^n: |z_j|<1,\,\, 1\le j\le n\}$$
of the $n$-dimensional complex space, and by the unit ball the
set
$$\mathbb{B}_n=\{z=(z_1,...,z_n)\in\mathbb{C}^n: |z|^2=|z_1|^2+\cdots+|z_n|^2<1\}.$$
Let $\hol(\disk^n)$ denote the space of holomorphic functions on $\disk^n$.
For $0<p<\infty$,
the Bergman space on the polydisk $\disk^n$ is defined as
$$A^p_\alpha(\disk^n)=\hol(\disk^n)\cap L^p(\disk ^n, dV_\alpha)$$
where $dV_\alpha(z) =dA_\alpha(z_1)\cdots dA_\alpha (z_n)$, and
$$dA_\alpha(z_k)=(1-|z_k|^2)^\alpha dx_k\,dy_k.$$
In general, $\alpha$ is bigger than $-1$, but in this paper we shall assume that $\alpha\ge 0$. If $\alpha=0$, we write
$dV(z)=dA(z_1)\cdots dA(z_n)$.
\par
Let $w$ be a positive function on $\disk^n$. For $0<p<\infty$, the weighted Bergman space $A^p_w(\disk^n,dV_\alpha)$ consists of holomorphic functions $f$ on the polydisk such that the integral
$$\Vert f\Vert^p_{A^p_w(\disk^n,dV_\alpha)}=\int_{\disk^n}|f(z)|^pw(z)dV_\alpha(z)$$
is finite. In a similar fashion, 
let $\hol(\mathbb{B}_n)$ denote the space of holomorphic functions on the unit ball.
The Bergman space on the unit ball is defined as
$$A^p(\mathbb{B}_n)=\hol(\mathbb{B}_n)\cap L^p(\mathbb{B}_n,dv)$$
where $dv$ is the normalized volume measure on $\mathbb{B}_n$.
Indeed, the weighted Bergman space $A^p_w(\mathbb{B}_n,dv)$ consists of holomorphic functions $f$ in the unit ball such that 
$$\Vert f\Vert^p_{A^p_w(\mathbb{B}_n,dv)}=\int_{\mathbb{B}_n}|f(z)|^pw(z)dv(z)<\infty.$$
Let $0<r<1$ and $z=(z_1,...,z_n)$. We mean by $rz=(rz_1,...,rz_n)$ and
$$\frac{z}{r}=\left(\frac{z_1}{r},...,\frac{z_n}{r}\right).$$
In this paper, we seek conditions on the weight function $w$ (defined on $\disk^n$ or $\mathbb{B}_n$) to guarantee that the dilations $f_r$ converge to $f$ in norm. Recall that if the weight function is radial, the problem is well-known; the difficulty arises when we consider non-radial weights. 
Since $f_r$ are holomorphic on a bigger domain, they can be approximated by polynomials so that $f$ can be approximated by polynomials too (provided that the weighted space contains the polynomials). We shall consider weights that satisfy the following Mergelyan-type condition: 
there is a non-negative
integer $k$ and an $r_0\in (0,1)$ such that
\begin{equation}\label{con-main}
r^kw\left (\frac{z}{r}\right )\le Cw(z),\quad\, r_0\le r<1,\, |z|<r.
\end{equation}
This condition on the weight function is universal; it does not depend on the underlying region.
In the next section we shall provide examples of non-radial weights satisfying this condition. 
We will prove some approximation theorems both for Bergman spaces and for Besov spaces. 
We should remark that
the definition of Besov spaces of holomorphic functions of several variables is slightly tricky; we shall present two different definitions for this spaces; the first definition that appears in Section \S 3 is taken from
\cite{zhu2}. The second definition, based on the notion of radial derivatives, is discussed in Section \S 4. 
To the best of our knowledge, the latter definition is new and were unable to trace it back in the literature.
Finally, in section 5, we introduce the concept of angular weights on polydisk. Theses are weights that just depend on the arguments of $z_1,...,z_n$. Similar approximation results are established for angular weights. To the best of the author's knowledge, the definition of angular weights are somewhat new, and have not yet been discussed in the literature.
\par
Polynomial approximation has many applications in operator theory of function spaces; see the papers \cite{chz}, \cite{kor}. Although approximation theory for non-radial weights has been discussed in earlier papers like \cite{hed1}, \cite{hed2}, but there are still many problems to be settled; see the comprehensive account on this topic in \cite{pelaez}. We close this section by mentioning several results in one variable case, see for instance \cite{abkar3}, \cite{abkar4}, \cite{abkar5}, \cite{abkar6}.

\section{The Bergman spaces}
It is a classical theorem that if the weight function on the unit disk is radial, that is $w(z)=w(|z|)$, then the polynomials are dense in the weighted Bergman space $A^2_w(\disk,dA)$. This was proved by S. Mergelyan \cite{mer} under the assumption that
$$\int_0^1rw(r)dr<\infty.$$
The same result is of course true for the Bergman spaces $A^p_w(\disk,dA)$, for $0<p<\infty$.
We assume now that $w$ is an arbitrary weight function (not necessarily radial) for which
$$\int_\disk w(z)dA(z)<\infty.$$
We shall see that under some mild condition on $w$, the polynomials are dense in the weighted Bergman spaces $A^p(\disk, d\mu_\alpha)$.
We begin with the following theorem.
\begin{theorem}\label{limsup}
Let $0<p<\infty$ and $d\mu_\alpha(z)=w(z)dV_\alpha(z)$ be a finite positive measure on the polydisk such that $w$ satisfies the condition (\ref{con-main}).
Then the polynomials are dense in the weighted Bergman space $A^p(\disk^n,d\mu_\alpha)$.
\end{theorem}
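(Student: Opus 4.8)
\emph{Proof proposal.} The strategy is the standard dilation argument, and I would first reduce the theorem to the single claim that, for every $f\in A^p(\disk^n,d\mu_\alpha)$, the dilations $f_r(z)=f(rz)$ satisfy $\|f_r-f\|_{A^p(\disk^n,d\mu_\alpha)}\to 0$ as $r\to1^-$. This reduction is immediate: each $f_r$ is holomorphic on the larger polydisk $(1/r)\disk^n$, which contains the compact set $\overline{\disk^n}$, so the partial sums of the Taylor expansion of $f_r$ at the origin converge to $f_r$ uniformly on $\overline{\disk^n}$; since $\mu_\alpha$ is a finite measure, this uniform convergence is convergence in $A^p(\disk^n,d\mu_\alpha)$, and therefore $f$ lies in the closure of the analytic polynomials.

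The heart of the matter is the change of variables $\zeta=rz$ combined with \eqref{con-main}. Because $r<1$ and $\alpha\ge0$, one has $\bigl(1-|\zeta_j|^2/r^2\bigr)^\alpha\le(1-|\zeta_j|^2)^\alpha$ whenever $|\zeta_j|<r$, while the Jacobian of $z\mapsto rz$ contributes a factor $r^{-2n}$; hence for every measurable $E\subseteq\disk^n$,
\[
\int_{E}|f_r|^p\,d\mu_\alpha=\int_{rE}|f(\zeta)|^p\,w\!\left(\tfrac{\zeta}{r}\right)\prod_{j=1}^n\Bigl(1-\tfrac{|\zeta_j|^2}{r^2}\Bigr)^{\alpha}r^{-2}\,d\xi_j\,d\eta_j\le r^{-2n}\int_{rE}|f(\zeta)|^p\,w\!\left(\tfrac{\zeta}{r}\right)dV_\alpha(\zeta),
\]
where $rE=\{rz:z\in E\}\subseteq r\disk^n$. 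Applying \eqref{con-main} on $r\disk^n$ (so that $\zeta/r\in\disk^n$), we get, for $r_0\le r<1$,
\[
\int_{E}|f_r|^p\,d\mu_\alpha\ \le\ C\,r^{-2n-k}\int_{rE}|f|^p\,d\mu_\alpha\ \le\ C\,r_0^{-2n-k}\int_{rE}|f|^p\,d\mu_\alpha .
\]
Taking $E=\disk^n$ already shows $f_r\in A^p(\disk^n,d\mu_\alpha)$ with $\|f_r\|\le C^{1/p}r_0^{-(2n+k)/p}\|f\|$ uniformly in $r\in[r_0,1)$.

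Next I would upgrade this to uniform integrability of the family $\{|f_r|^p:r_0\le r<1\}$ against the finite measure $\mu_\alpha$. Taking $E=\{z:|f_r(z)|>M\}$, whose image $rE$ lies inside $\{\zeta:|f(\zeta)|>M\}$, the last display yields
\[
\sup_{r_0\le r<1}\ \int_{\{|f_r|>M\}}|f_r|^p\,d\mu_\alpha\ \le\ C\,r_0^{-2n-k}\int_{\{|f|>M\}}|f|^p\,d\mu_\alpha ,
\]
and the right-hand side tends to $0$ as $M\to\infty$ since $|f|^p\in L^1(d\mu_\alpha)$. As $f$ is holomorphic on $\disk^n$, we also have $f_r\to f$ locally uniformly on $\disk^n$, hence $\mu_\alpha$-a.e.; Vitali's convergence theorem then gives $\|f_r\|^p\to\|f\|^p$. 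Finally, using $|a-b|^p\le c_p(|a|^p+|b|^p)$ with $c_p=\max\{1,2^{p-1}\}$ and applying Fatou's lemma to the nonnegative functions $c_p(|f_r|^p+|f|^p)-|f_r-f|^p$, one concludes $\limsup_{r\to1}\|f_r-f\|^p_{A^p(\disk^n,d\mu_\alpha)}\le0$, which is what we wanted.

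I expect the uniform-integrability step to be the only genuine obstacle, and it is precisely here that condition \eqref{con-main} is used in an essential way: without controlling the distortion of $w$ under the contraction $z\mapsto z/r$, the dilations could pile up mass near $\partial\disk^n$ and destroy $L^p$-convergence. The other ingredients — the change of variables (keeping track of the $r^{-2n}$ Jacobian and the sign of $\alpha$), the Taylor-polynomial approximation of each $f_r$, and the Fatou/Vitali endgame (which must be written with the constant $c_p$ so as to cover the range $0<p<1$) — are routine.
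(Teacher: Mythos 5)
Your proposal is correct, and its analytic core --- the change of variables $\zeta=rz$, the bound $(1-|\zeta_j|^2/r^2)^\alpha\le(1-|\zeta_j|^2)^\alpha$ (valid because $\alpha\ge 0$), and the use of condition (\ref{con-main}) on $r\disk^n$ --- is exactly the estimate on which the paper's proof runs. Where you genuinely diverge is the measure-theoretic endgame and the final polynomial step. The paper deduces $\limsup_{r\to 1^-}\int|f_r|^p\,d\mu_\alpha\le\int|f|^p\,d\mu_\alpha$ by invoking dominated convergence, upgrades this with Fatou to the equality (\ref{eq-lim-E}) on measurable sets, and then combines absolute continuity of the integral with Egorov's theorem, splitting $\disk^n$ into a set where $f_r\to f$ uniformly and a remainder of small measure. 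You instead apply the same change-of-variables inequality to the superlevel sets $\{|f_r|>M\}$, whose dilates sit inside $\{|f|>M\}$, to obtain uniform integrability of the family $\{|f_r|^p\}_{r_0\le r<1}$, and then conclude via Vitali together with the Fatou/Riesz trick using $c_p=\max\{1,2^{p-1}\}$. Your route buys a cleaner justification precisely where the paper is least explicit --- its appeal to dominated convergence exhibits no dominating function, whereas your uniform-integrability bound follows immediately from the displayed inequality --- and it covers $0<p<1$ transparently; the paper's Egorov-based argument, on the other hand, needs no Vitali-type machinery and stays closer to the classical one-variable proofs. For the last step you approximate each $f_r$ by its Taylor partial sums, which converge uniformly on $\overline{\disk^n}$ since $f_r$ is holomorphic on a strictly larger polydisk; this is the natural several-variables substitute for the paper's citation of Mergelyan's theorem and is, if anything, the safer reference.
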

\begin{proof}
Let $f\in A^p(\disk^n, d\mu_\alpha)$, and consider the dilatations $f_r(z)=f(rz)$ for $0\le r<1$, and $z\in\disk^n$.
By a change of variables, we see that for non-negative $\alpha$ and big enough $r$ ($r\ge r_0$, see the condition (\ref{con-main}) above) we have
\begin{align*}\int_{\disk^n} |f_r(z)|^pd\mu_\alpha(z)&=
\frac{1}{r^{2n+k}}\int_{r\disk\times\cdots\times r\disk} |f(z)|^p r^k w\left(\frac{z}{r} \right)\prod_{k=1}^n\Big (\frac{r^2-|z_k|^2}{r^2}\Big )^\alpha dV(z)\\
&\le \frac{C}{r^{2n+k}}\int_{r\disk\times\cdots\times r\disk} |f(z)|^p w(z)dV(z)<\infty.
\end{align*}
We now use the dominated convergence theorem to conclude that
$$\limsup_{r\to 1^{-}} \int_{{\mathbb{D}}^n}|f_r|^pd\mu_\alpha(z)\le \int_{{\mathbb{D}}^n}|f|^pd\mu_\alpha(z).$$
This inequality together with Fatou's lemma implies that for each measurable subset $E\subseteq \disk^n$,
\begin{equation}\label{eq-lim-E}
\lim_{r\to 1^{-}} \int_{E}|f_r|^pd\mu_\alpha(z)= \int_{E}|f|^pd\mu_\alpha(z).
\end{equation}
By the continuity property of integral on its domain, there is a $\delta>0$ such that
$$\mu(E)<\delta \implies \int_{E}|f|^pd\mu_\alpha(z)<\epsilon.$$
It follows from Egorov's theorem that there is a subset $E\subset \disk^n$ such that $\mu_\alpha(\disk^n\setminus E)<\delta$ and $f_r\to f$ uniformly on $E$.
We now write
\begin{align}\label{2max}
\int_{{\disk}^n} |f_r-f|^p d\mu_\alpha &= \int_{E} |f_r-f|^p d\mu_\alpha +\int_{\disk\setminus E} |f_r-f|^p d\mu_\alpha  \nonumber \\
&\le \int_{E} |f_r-f|^p d\mua +2^p\int_{\pdisk\setminus E} (|f_r|^p+|f|^p) d\mua.
\end{align}
Due to the uniform convergence of $f_r$ to $f$ on $E$, the first integral on the right-hand side of (\ref{2max}) tends to zero as $r\to 1^-$.
As for the second integral, we use (\ref{eq-lim-E}) to obtain
\begin{align*}0\le \liminf_{r\to 1^{-}} \int_{\pdisk} |f_r-f|^pd\mua (z)&\le \limsup_{r\to 1^{-}} \int_{\pdisk} |f_r-f|^pd\mua (z)\\
&\le 2^{p+1}\int_{\pdisk\setminus E} |f|^p d\mua\\
&\le 2^{p+1}\epsilon,
\end{align*}
from which it follows that
\begin{equation}\label{ffrto0}
\lim_{r\to 1^{-}}\int_{\pdisk}|f_r-f|^pd\mua (z)=0.
\end{equation}
Given $\epsilon>0$, there is $0<r_0<1$ such that $\|f-f_{r_0}\|_{A^p(\disk^n,d\mua)}<\epsilon$.
Since $f_{r_0}$ is analytic in a bigger domain than the unit polydisk, we may use Mergelyan's theorem to approximate $f_{r_0}$ uniformly on $\disk^n$ by a polynomial $Q$. Since $\mua$ is a finite positive measure, we obtain
\begin{align*}\| f_{r_0}-Q\|^p_{A^p(\disk^n,d\mua)}&=\int_{\pdisk} |f_{r_0}-Q|^p d\mua \\
&\le \epsilon^p\int_{\pdisk} d\mua(z):=\epsilon^pC,
\end{align*}
from which it follows that
$$\| f-Q\|_{A^p(\disk^n,d\mua)}\le \epsilon(1+C^{1/p}).$$
This proves the theorem.
\end{proof}

\begin{remark}
The condition (\ref{con-main}) can be weakened in the following way: there is a non-negative function $g$ such that for sufficiently large $r$ we have $r^k w(z/r)\le g(z)$ and
$$\int_{\pdisk} |f(z)|^p g(z)dV_\alpha(z)<\infty.$$
Again we use the dominated convergence theorem to obtain
\begin{equation*}
\limsup_{r\to 1^{-}} \int_{E}|f_r|^pd\mua(z)\le \int_{E}|f|^pd\mua(z).
\end{equation*}
\end{remark}

\begin{theorem}\label{increasing}
Let $0<p<\infty$ and $d\mua(z)=w(z)dV_\alpha(z)$ be a finite positive measure on the polydisk such that for some non-negative integer $k$, the function
$$r\mapsto r^kw\left (\frac{z}{r}\right ),\quad |z|<r,$$
is increasing in $r$. Then the polynomials are dense in $A^p(\disk^n,wdV_\alpha)$.
\end{theorem}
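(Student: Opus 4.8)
The plan is to reduce this to Theorem~\ref{limsup} by observing that the monotonicity hypothesis is nothing but a convenient and easily checked special case of the Mergelyan-type condition (\ref{con-main}). First I would fix $z$ in the open polydisk and look at the function $r\mapsto r^kw(z/r)$ on the set of those $r$ with $|z|<r<1$; by hypothesis it is increasing there, hence for every such $r$ it is dominated by its supremum, that is, by $\lim_{\rho\to 1^-}\rho^kw(z/\rho)$.

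Next I would identify this limit. Since $\rho^k\to 1$ and $z/\rho\to z$ (which stays in the open polydisk) as $\rho\to 1^-$, the limit equals $w(z)$; this is the one place where a mild regularity of $w$ enters, it being enough that $\limsup_{\rho\to1^-}w(z/\rho)\le w(z)$, a property satisfied in particular when $w$ is continuous and hence by every concrete weight exhibited in this paper. Equivalently, one simply evaluates the increasing function at the right endpoint $r=1$, where it takes the value $w(z)$. In either case one obtains
$$r^kw\!\left(\frac{z}{r}\right)\le w(z),\qquad |z|<r<1,$$
so $w$ satisfies (\ref{con-main}) with $C=1$ and an arbitrary $r_0\in(0,1)$. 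Since by assumption $d\mua=w\,dV_\alpha$ is a finite positive measure on the polydisk, Theorem~\ref{limsup} applies verbatim and yields the density of the polynomials in $A^p(\disk^n,w\,dV_\alpha)$.

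I do not expect any genuine obstacle: the content of the statement is precisely the remark that monotonicity implies (\ref{con-main}). The only point deserving a word is the passage $\rho\to 1^-$ above. If one prefers to impose no regularity on $w$ at all, one replaces $w(z)$ by $g(z):=\sup_{|z|<\rho<1}\rho^kw(z/\rho)$ and invokes the Remark following Theorem~\ref{limsup}; the integrability $\int_{\disk^n}|f|^pg\,dV_\alpha<\infty$ required there, for the fixed $f\in A^p(\disk^n,d\mua)$, would be verified by the same change of variables used in the proof of Theorem~\ref{limsup}, together with the monotone convergence theorem and the elementary bound $\prod_{j=1}^n(r^2-|z_j|^2)^\alpha\le\prod_{j=1}^n(1-|z_j|^2)^\alpha$, valid for $0<r<1$ and $\alpha\ge 0$.
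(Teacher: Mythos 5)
Your argument is correct, but it follows a genuinely different route from the paper's. You reduce the statement to Theorem \ref{limsup}: since the only constraint in the hypothesis is $|z|<r$, the increasing function $r\mapsto r^kw(z/r)$ may be evaluated at the admissible endpoint $r=1$, giving $r^kw(z/r)\le w(z)$ for $|z|<r<1$, i.e. condition (\ref{con-main}) with $C=1$, and the density of polynomials follows verbatim from that theorem. The paper never passes through (\ref{con-main}); instead it repeats the change of variables of Theorem \ref{limsup} and replaces the dominated convergence step by the monotone convergence theorem (the integrand is increasing in $r$ because both $r\mapsto r^kw(z/r)$ and $r\mapsto\bigl((r^2-|z_j|^2)/r^2\bigr)^\alpha$ are increasing), obtaining $\limsup_{r\to1^-}\|f_r\|^p\le\|f\|^p$ and then rerunning the Egorov--Fatou--Mergelyan argument. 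Note that the paper's assertion that the monotone limit equals $\int_{\disk^n}|f|^pw\,dV_\alpha$ tacitly uses $\lim_{\rho\to1^-}\rho^kw(z/\rho)\le w(z)$, which is precisely your endpoint evaluation (or your upper-semicontinuity remark); so your reduction is no less rigorous than the original proof, and it is more economical in that it reuses Theorem \ref{limsup} instead of re-proving its key estimate.

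One caveat concerns your fallback through the Remark: the proposed verification of $\int_{\disk^n}|f|^pg\,dV_\alpha<\infty$ with $g(z)=\sup_{|z|<\rho<1}\rho^kw(z/\rho)$ does not go through as sketched, since the inequality $\prod_j(r^2-|z_j|^2)^\alpha\le\prod_j(1-|z_j|^2)^\alpha$ points in the wrong direction for dominating that integral by the dilated norms, whose boundedness is exactly what is at issue. The fallback is, however, unnecessary: applying the hypothesis at every point of the ray through $z$ shows that $s\mapsto s^{-k}w(s\zeta)$ is non-increasing along each ray, and hence $r^kw(z/r)\le w(z)$ for $|z|<r<1$ (equivalently $g\le w$) with no continuity assumption at all, so your main argument with $C=1$ already covers the general case.
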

\begin{proof}
It suffices to show that each function $f\in A^p(\disk^n,d\mua)$ satisfies
$$\limsup_{r\to 1^{-}} \int_{\pdisk}|f_r|^pd\mua(z)\le \int_{\pdisk}|f|^pd\mua(z).$$
To do this, we make a change of variables to get
\begin{equation*}\int_{\pdisk} |f_r(z)|^pd\mua(z)=
\frac{1}{r^{2n+k}}\int_{r\disk\times\cdots\times r\disk} |f(z)|^p r^k w\left(\frac{z}{r}\right )\prod_{j=1}^n\Big (\frac{r^2-|z_j|^2}{r^2}\Big )^\alpha dV(z).
\end{equation*}
Note that the functions
$$r\mapsto r^kw\left (\frac{z}{r}\right ),\quad r\mapsto \Big (\frac{r^2-|z_j|^2}{r^2}\Big )^\alpha$$
are increasing in $r$ (by the assumption and the fact that $\alpha$ is non-negative), so that
the monotone convergence theorem applies to yield
$$\limsup_{r\to 1^-}\int_{\pdisk} |f_r(z)|^pd\mua(z)=\int_{\pdisk} |f(z)|^pd\mua(z).$$
Now, we use an argument as in the proof of Theorem \ref{limsup} to get the result.
\end{proof}

\begin{example}
(a).~Let $\alpha>-1$ and $w(z)=\prod_{k=1}^n (\alpha+1)(1-|z_k|^2)^\alpha$.
$$r^kw\left (\frac{z}{r}\right )=\prod_{k=1}^n(\alpha+1)r^{k-2\alpha}(r^2-|z_k|^2)^\alpha\le w(z),$$
provided that $k\ge 2\alpha+1$. Now Theorem \ref{limsup} can be applied.\\
(b).~
It is easily seen that the Gaussian weight
$$w(z)=e^{-\beta |z|^2}=e^{-\beta (|z_1|^2+\cdots+|z_n|^2)},\quad \beta>0,$$
satisfies the condition of the above theorem for $k=0$, since for $0<r<r$,
\begin{align*}
w\left (\frac{z}{r}\right)&=e^{\frac{-\beta}{r^2}(|z_1|^2+\cdots+|z_n|^2)}\\
&\le e^{-\beta (|z_1|^2+\cdots+|z_n|^2)}\\ &=w(z).
\end{align*}
The same is true for the non-radial weight
$$w(z)=e^{-\beta (|x_1|^2+\cdots+|x_n|^2)},\quad \beta>0,$$
where $z=(z_1,...,z_n),\, z_k=x_k+iy_k,\, 1\le k\le n$.\\
Note that in some instances the function $r\mapsto w(z/r)$ is not increasing, but we may find some $k$ for which $r\mapsto r^kw(z/r)$
is increasing for $r$ bigger than $|z|$. For example, if we take $w(z)=\exp(|z|)$, then
$$\frac{d}{dr}w\left (\frac{z}{r}\right )=-\frac{|z|}{r^2}\exp\left (\frac{|z|}{r}\right )<0$$
while
$$\frac{d}{dr}\left[rw(\frac{z}{r})\right]=\left (1-\frac{|z|}{r}\right )\exp\left (\frac{|z|}{r}\right )>0,\quad |z|<r.$$
Note also that $r^2e^{|z|/r}$ is increasing for $r>1/2$ since
$$\frac{d}{dr}(r^2e^{|z|/r})=2re^{|z|/r}+r^2(\frac{-1}{r^2}e^{|z|/r})=(2r-1)e^{|z|/r}>0.$$
\end{example}

\section{The Besov spaces}
In the unit disk, the weighted Dirichlet space $\mathcal{D}^p_w,\, 1<p<\infty$, consists of analytic functions in the unit disk for which
$$\|f\|_{\mathcal{D}^p_w}=\left( |f(0)|^p+\int_{\disk}|f^\prime(z)|^pw(z)dA(z)\right )^{1/p}$$
is finite.
These spaces are particular cases of certain Banach spaces of analytic functions, namely, the weighted analytic Besov spaces. The weighted analytic Besov space $\mathcal{B}_w^p$ consists of analytic functions $f$ in the unit disk for which the integral
$$\int_{\disk}(1-|z|^2)^{p-2}|f^\prime(z)|^p w(z)dA(z)$$
is finite.
The norm of a function in the weighted Besov space is given by
\begin{equation*}
\| f\|_{\mathcal{B}^p_w}=\left (|f(0)|^p+\int_{\disk}(1-|z|^2)^{p-2}|f^\prime(z)|^p w(z) dA(z)\right )^{1/p}.
\end{equation*}
In higher dimensions, the definition of Besov spaces is more subtle. Let $dv(z)$ be the normalized volume measure in the unit ball $\mathbb{B}_n$ of $\mathbb{C}^n$, and let
$$d\tau(z)=\frac{dv(z)}{(1-|z|^2)^{n+1}}$$
be the M\"{o}bius invariant measure in $\mathbb{B}_n$. For $0<p<\infty$, the Besov space $\mathcal{B}^p$ consists of all holomorphic functions $f\in \mathbb{B}_n$ such that
$$(1-|z|^2)^N\frac{\partial^m f}{\partial z^m}(z)\in L^p(\mathbb{B}_n, d\tau),\quad |m|=N,$$
where $N$ is any fixed positive integer satisfying $pN>n$. According to \cite[Theorem 6.1]{zhu2}, the definition of $\mathcal{B}^p$ is independent of the choice of $N$. The "norm" is defined by
$$\Vert f\Vert_{\mathcal{B}^p}^p=\sum_{|m|\le N-1}\left |\frac{\partial^m f}{\partial z^m}(0)\right |^p+ \sum_{|m|=N}\int_{\mathbb{B}_n}
\left |(1-|z|^2)^N \frac{\partial^m f}{\partial z^m}(z)\right |^p d\tau (z).$$
When $p=2,\, \mathcal{B}^2$ plays the role of the Dirichlet space and is denoted by $\mathcal{D}^2$.

\begin{theorem}\label{limsup-dirichlet}
Let $d\mu(z)=w(z)d\tau(z)$ be a finite positive measure on the unit ball such that $w$ satisfies the condition (\ref{con-main}).
Then the polynomials are dense in $\mathcal{D}^2_w(\mathbb{B}_n, d\mu)$.
\end{theorem}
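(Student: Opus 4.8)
The plan is to mirror the proof of Theorem~\ref{limsup}: first show that the dilations $f_r(z)=f(rz)$ converge to $f$ in the norm of $\mathcal{D}^2_w(\mathbb{B}_n,d\mu)$, and then approximate each $f_{r_0}$, which is holomorphic on a neighbourhood of $\overline{\mathbb{B}_n}$, by polynomials. Recall that the relevant norm is $\|f\|_{\mathcal{D}^2_w}^2=\sum_{|m|\le N-1}\big|\partial^m f(0)/\partial z^m\big|^2+\sum_{|m|=N}\int_{\mathbb{B}_n}\big|(1-|z|^2)^N\,\partial^m f/\partial z^m(z)\big|^2\,d\mu(z)$ for a fixed integer $N$ with $2N>n$, and that $\partial^m f_r/\partial z^m(z)=r^{|m|}\,\partial^m f/\partial z^m(rz)$. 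Hence the finitely many point-evaluation terms of $f_r$ converge to those of $f$ trivially as $r\to1^-$, and the matter reduces to showing, for each multi-index $m$ with $|m|=N$, that $g_r^{(m)}\to g^{(m)}$ in $L^2(d\mu)$, where $g^{(m)}(z)=(1-|z|^2)^N\,\partial^m f/\partial z^m(z)$ and $g_r^{(m)}(z)=r^N(1-|z|^2)^N\,\partial^m f/\partial z^m(rz)$.

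The key step is the change-of-variables estimate, now carrying the extra factor $(1-|z|^2)^N$. Fixing $m$ with $|m|=N$, writing $d\tau(z)=dv(z)/(1-|z|^2)^{n+1}$, and substituting $u=rz$, I would obtain
$$\int_{\mathbb{B}_n}|g_r^{(m)}(z)|^2\,d\mu(z)=r^{\,2-2N}\int_{r\mathbb{B}_n}(r^2-|u|^2)^{2N-n-1}\Big|\frac{\partial^m f}{\partial z^m}(u)\Big|^2 w(u/r)\,dv(u).$$
Here it is essential that $2N-n-1\ge 0$ (true since $2N$ and $n$ are integers and $2N>n$): this permits the bound $(r^2-|u|^2)^{2N-n-1}\le(1-|u|^2)^{2N-n-1}$, and together with $w(u/r)\le Cr^{-k}w(u)$ for $|u|<r$ from (\ref{con-main}) one gets, for $r\ge r_0$,
$$\int_{\mathbb{B}_n}|g_r^{(m)}|^2\,d\mu\le Cr^{\,2-2N-k}\int_{\mathbb{B}_n}|g^{(m)}|^2\,d\mu<\infty,$$
a bound uniform in $r\in[r_0,1)$. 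Since $g_r^{(m)}\to g^{(m)}$ pointwise (indeed locally uniformly) on $\mathbb{B}_n$, the dominated convergence theorem, applied exactly as in the proof of Theorem~\ref{limsup}, gives $\int_{\mathbb{B}_n}|g_r^{(m)}|^2\,d\mu\to\int_{\mathbb{B}_n}|g^{(m)}|^2\,d\mu$, and in particular $\limsup_{r\to1^-}\int_{\mathbb{B}_n}|g_r^{(m)}|^2\,d\mu\le\int_{\mathbb{B}_n}|g^{(m)}|^2\,d\mu$.

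From this point the argument is word-for-word that of Theorem~\ref{limsup}, applied to $g^{(m)}$ in place of $f$ (the only properties used being that $g_r^{(m)}\to g^{(m)}$ pointwise and the $\limsup$ inequality just established): the $\limsup$ inequality together with Fatou's lemma yields $\lim_{r\to1^-}\int_E|g_r^{(m)}|^2\,d\mu=\int_E|g^{(m)}|^2\,d\mu$ for every measurable $E\subseteq\mathbb{B}_n$; then the finiteness of $\mu$, the absolute continuity of $E\mapsto\int_E|g^{(m)}|^2\,d\mu$, and Egorov's theorem, producing a set $E$ with $\mu(\mathbb{B}_n\setminus E)$ small on which $g_r^{(m)}\to g^{(m)}$ uniformly, let me split $\int_{\mathbb{B}_n}|g_r^{(m)}-g^{(m)}|^2\,d\mu$ over $E$ and $\mathbb{B}_n\setminus E$, bounding the second piece by $4\int_{\mathbb{B}_n\setminus E}(|g_r^{(m)}|^2+|g^{(m)}|^2)\,d\mu$, and conclude $\int_{\mathbb{B}_n}|g_r^{(m)}-g^{(m)}|^2\,d\mu\to 0$. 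Summing over the finitely many $m$ with $|m|=N$ and adding the point-evaluation terms gives $\|f_r-f\|_{\mathcal{D}^2_w}\to 0$.

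It remains to pass from $f_{r_0}$ to a polynomial. Given $\epsilon>0$, choose $r_0$ with $\|f-f_{r_0}\|_{\mathcal{D}^2_w}<\epsilon$; then $f_{r_0}$ is holomorphic on $\{|z|<1/r_0\}$, and for any fixed $\rho$ with $1<\rho<1/r_0$ the compact convex set $\overline{\rho\mathbb{B}_n}$ is polynomially convex, so, as in Theorem~\ref{limsup}, there is a polynomial $Q$ with $\sup_{\overline{\rho\mathbb{B}_n}}|f_{r_0}-Q|<\epsilon$. Multivariable Cauchy estimates on polydiscs of polyradius $(\rho-1)/\sqrt n$ centred at points of $\overline{\mathbb{B}_n}$ then give $\sup_{\overline{\mathbb{B}_n}}\big|\partial^m(f_{r_0}-Q)/\partial z^m\big|\le C_{N,\rho}\,\epsilon$ for all $|m|\le N$; since $\mu$ is finite and $(1-|z|^2)^{2N}\le 1$, this yields $\|f_{r_0}-Q\|_{\mathcal{D}^2_w}\le C\epsilon$, whence $\|f-Q\|_{\mathcal{D}^2_w}\le(1+C)\epsilon$, which proves the theorem. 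I expect the only real obstacle to be the change-of-variables estimate of the second paragraph: because $\partial^m f_r/\partial z^m$ is \emph{not} a dilation, Theorem~\ref{limsup} cannot be quoted directly and one must redo the computation with the weight $(1-|z|^2)^N$ in place, the whole bound hinging on the inequality $2N-n-1\ge 0$, which is exactly the condition $pN>n$ (with $p=2$) built into the definition of the Besov/Dirichlet space.
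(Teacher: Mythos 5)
Your proposal is correct and follows essentially the same route as the paper: dilate, change variables, use condition (\ref{con-main}) together with $2N>n$ to get a dominating integrand, run the Fatou/Egorov argument of Theorem \ref{limsup} on the functions $(1-|z|^2)^N\partial^m f/\partial z^m$, and finally approximate the dilation $f_{r_0}$ uniformly by a polynomial (the paper cites Zhu's Theorem 6.2 here, while you argue via Oka--Weil plus Cauchy estimates, which is equally valid and more self-contained). In fact your step of combining the exponents into $(r^2-|u|^2)^{2N-n-1}\le(1-|u|^2)^{2N-n-1}$ is cleaner than the paper's displayed estimate, which separates off the factor $(r^2-|z|^2)^{-(n+1)}$ and appeals to its monotonicity in $r$.
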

\begin{proof}
As for approximation, it is enough to work with the following expression for norm (the constant term does not play any role in approximation):
$$\|f\|^2_{\mathcal{D}^2_w}= \sum_{|m|=N}\int_{\mathbb{B}_n}
\left |(1-|z|^2)^N \frac{\partial^m f}{\partial z^m}(z)\right |^2 w(z)d\tau (z).$$
Note that
$$\|f_r\|^2_{\mathcal{D}^2_w}= \sum_{|m|=N}\int_{\mathbb{B}_n}
\left |(1-|z|^2)^N \frac{\partial^m f_r}{\partial z^m}(z)\right |^2 w(z)d\tau (z).$$
We now replace $z$ by $z/r$ to obtain
\begin{align*}\|f_r\|^2_{\mathcal{D}^2_w}&= \sum_{|m|=N}\frac{1}{r^k}\int_{r\mathbb{B}_n}
\left |\Big (\frac{r^2-|z|^2}{r^2}\Big )^N r^{|m|} \frac{\partial^m f}{\partial z^m}(z)\right |^2 r^k w\left (\frac{z}{r}\right )\frac{r^2}{(r^2-|z|^2)^{n+1}} dv(z)\\
&\le C\frac{r^{2+2|m|}}{r^{k+4N}}\sum_{|m|=N}\int_{r\mathbb{B}_n}
\left |\big (1-|z|^2\big )^N  \frac{\partial^m f}{\partial z^m}(z)\right |^2 w(z) \frac{dv(z)}{(r^2-|z|^2)^{n+1}} \\
&=C\frac{r^{2+2|m|}}{r^{k+4N}}\sum_{|m|=N}\int_{\mathbb{B}_n}\chi(r\mathbb{B}_n)(z)
\left |\big (1-|z|^2\big )^N\frac{\partial^m f}{\partial z^m}(z)\right |^2 w(z) \frac{dv(z)}{(r^2-|z|^2)^{n+1}}\\
&\le C\frac{r^{2+2|m|}}{r^{k+4N}}\sum_{|m|=N}\int_{\mathbb{B}_n}
\left |\big (1-|z|^2\big )^N\frac{\partial^m f}{\partial z^m}(z)\right |^2 w(z)\frac{dv(z)}{(r^2-|z|^2)^{n+1}}.
\end{align*}
Since the function
$$r\mapsto \frac{1}{(r^2-|z|^2)^{n+1}},\quad |z|<r,$$
is monotone decreasing in $r$,
the last integral converges ar $r\to 1^-$, so that a version of the dominated convergence theorem can be applied to the first integral:
\begin{align*}\limsup_{r\to 1^-}\|f_r\|^2_{\mathcal{D}^2_w}&= \limsup_{r\to 1^-}\sum_{|m|=N}\frac{1}{r^k}\int_{r\mathbb{B}_n}
\left |\Big (\frac{r^2-|z|^2}{r^2}\Big )^N r^{|m|} \frac{\partial^m f}{\partial z^m}(z)\right |^2 r^k w\left (\frac{z}{r}\right )\frac{r^2}{(r^2-|z|^2)^{n+1}} dv(z)\\
&=\sum_{|m|=N}\int_{\mathbb{B}_n}
\left |(1-|z|^2)^N \frac{\partial^m f}{\partial z^m}(z)\right |^2 w(z)\frac{dv(z)}{(1-|z|^2)^{n+1}}\\
&=\|f\|^2_{\mathcal{D}^2_w}.
\end{align*}
The rest of the proof goes in the same way as in the proof of Theorem \ref{limsup}; we just replace $f$ and $f_r$, by $\frac{\partial^m f}{\partial z^m}$ and $\frac{\partial^m f_r}{\partial z^m}$, respectively, to obtain
$$\lim_{r\to 1^-}\|f_r -f\|^2_{\mathcal{D}^2_w}=\lim_{r\to 1^-}
\sum_{|m|=N}\int_{\mathbb{B}_n}
\left |(1-|z|^2)^N \left (\frac{\partial^m f}{\partial z^m}(z)- \frac{\partial^m f_r}{\partial z^m}(z)\right )\right |^2 w(z)d\tau (z)=0.$$
On the other hand, each dilation $f_r$ can be approximated by polynomials (see \cite[Theorem 6.2]{zhu2}), therefore, $f$ can be approximated by polynomials.
\end{proof}

We now state a similar statement for the analytic Besov spaces.
\begin{theorem}\label{limsup-dirichlet}
Let $d\mu(z)=w(z)d\tau(z)$ be a finite positive measure on the unit ball such that $w$ satisfies the condition (\ref{con-main}).
Then the polynomials are dense in $\mathcal{B}^p_w(\mathbb{B}_n, d\mu)$.
\end{theorem}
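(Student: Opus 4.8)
The plan is to follow the proof of the preceding theorem (the Dirichlet case $p=2$) essentially verbatim, replacing the exponent $2$ by $p$ and the single first-order derivative by the family $\partial^m f/\partial z^m$, $|m|=N$, where $N$ is a fixed integer with $pN>n$. For approximation one may discard the finitely many constant terms $\partial^m f/\partial z^m(0)$, $|m|\le N-1$ (they contribute $\sum_{|m|\le N-1}|r^{|m|}-1|^p\,|\partial^m f/\partial z^m(0)|^p$, which tends to $0$), and work with the seminorm $\sum_{|m|=N}\int_{\mathbb{B}_n}\bigl|(1-|z|^2)^N\partial^m f/\partial z^m(z)\bigr|^p w(z)\,d\tau(z)$. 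For $f\in\mathcal{B}^p_w(\mathbb{B}_n,d\mu)$ and $f_r(z)=f(rz)$ the principal step is the inequality
$$\limsup_{r\to1^-}\|f_r\|^p_{\mathcal{B}^p_w}\le\|f\|^p_{\mathcal{B}^p_w}.$$
To obtain it I would combine $\partial^m f_r/\partial z^m(z)=r^{|m|}\,\partial^m f/\partial z^m(rz)$ with the substitution $\zeta=rz$, under which $1-|z|^2=(r^2-|\zeta|^2)/r^2$, $dv(z)=r^{-2n}\,dv(\zeta)$, and $\mathbb{B}_n$ becomes $r\mathbb{B}_n$; this gives, just as in the Dirichlet case,
$$\|f_r\|^p_{\mathcal{B}^p_w}=\sum_{|m|=N}\frac1{r^k}\int_{r\mathbb{B}_n}\Bigl|\Bigl(\frac{r^2-|\zeta|^2}{r^2}\Bigr)^N r^{|m|}\frac{\partial^m f}{\partial z^m}(\zeta)\Bigr|^p r^k w\Bigl(\frac{\zeta}{r}\Bigr)\frac{r^2\,dv(\zeta)}{(r^2-|\zeta|^2)^{n+1}}.$$

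To pass to the limit I would estimate as follows: condition (\ref{con-main}) gives $r^k w(\zeta/r)\le Cw(\zeta)$ on $\{|\zeta|<r\}$ for $r_0\le r<1$, while $r^2-|\zeta|^2\le1-|\zeta|^2$ and $N\ge0$ let one replace $(r^2-|\zeta|^2)^{pN}$ by $(1-|\zeta|^2)^{pN}$ in the numerator, keeping the singular factor $(r^2-|\zeta|^2)^{-(n+1)}$ in the denominator; the remaining prefactor is a fixed power of $r$ tending to $1$. Since $r\mapsto(r^2-|\zeta|^2)^{-(n+1)}$ is decreasing for $r>|\zeta|$ and $\chi_{r\mathbb{B}_n}\uparrow\chi_{\mathbb{B}_n}$ pointwise, the integrand is, for $r$ close to $1$, dominated by $\bigl|(1-|\zeta|^2)^N\partial^m f/\partial z^m(\zeta)\bigr|^p w(\zeta)(1-|\zeta|^2)^{-(n+1)}$, which belongs to $L^1(dv)$ exactly because $f\in\mathcal{B}^p_w$; the version of the dominated convergence theorem used for the preceding theorem then yields the displayed $\limsup$ inequality.

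With that inequality in hand I would reproduce the endgame of Theorem~\ref{limsup}. As $\partial^m f_r\to\partial^m f$ pointwise on $\mathbb{B}_n$, Fatou's lemma together with the $\limsup$ bound forces $\int_E\bigl|(1-|z|^2)^N\partial^m f_r/\partial z^m\bigr|^p w\,d\tau\to\int_E\bigl|(1-|z|^2)^N\partial^m f/\partial z^m\bigr|^p w\,d\tau$ for every measurable $E\subseteq\mathbb{B}_n$ and every $|m|=N$; the absolute continuity of the finite measure $\mu=w\,d\tau$ and Egorov's theorem (applied to the functions $(1-|z|^2)^N\partial^m f_r/\partial z^m$) then give, after splitting $\int_{\mathbb{B}_n}=\int_E+\int_{\mathbb{B}_n\setminus E}$ and using $|a-b|^p\le 2^p(|a|^p+|b|^p)$, that $\lim_{r\to1^-}\|f_r-f\|_{\mathcal{B}^p_w}=0$. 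Finally, each dilation $f_{r_0}$ is holomorphic on $(1/r_0)\mathbb{B}_n\supset\overline{\mathbb{B}_n}$, so its Taylor polynomials converge to it uniformly on $\overline{\mathbb{B}_n}$ together with all partial derivatives of order $N$ (cf. \cite[Theorem~6.2]{zhu2}); since $(1-|z|^2)^{pN}\le1$ one has $\int_{\mathbb{B}_n}(1-|z|^2)^{pN}w\,d\tau\le\int_{\mathbb{B}_n}w\,d\tau=\mu(\mathbb{B}_n)<\infty$, so this uniform approximation of $f_{r_0}$ and of its $N$-th order derivatives is $\mathcal{B}^p_w$-approximation, and combining the two steps shows that $f$ is approximable by polynomials in $\mathcal{B}^p_w$.

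I expect the main obstacle to be the $\limsup$ inequality: justifying the interchange of limit and integral near $\partial\mathbb{B}_n$, where the factor $(r^2-|\zeta|^2)^{-(n+1)}$ is singular and the dilated weight $w(\zeta/r)$ can be compared with $w(\zeta)$ only through (\ref{con-main}). The monotonicity of $r\mapsto(r^2-|\zeta|^2)^{-(n+1)}$ for $r>|\zeta|$ is what makes this tractable, exactly as in the preceding proof; in particular the borderline case $n<pN<n+1$, where $pN-n-1<0$, needs no separate treatment, since that factor is never split off. Apart from this the argument is, mutatis mutandis, identical to the $p=2$ case, the only genuinely $p$-dependent point being the bound $|a-b|^p\le 2^p(|a|^p+|b|^p)$ (valid for all $p>0$) in place of its $p=2$ instance.
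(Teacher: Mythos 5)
Your outline follows the paper's route (change of variables, condition (\ref{con-main}), a dominated--convergence argument for $\limsup_{r\to1^-}\|f_r\|^p_{\mathcal{B}^p_w}\le\|f\|^p_{\mathcal{B}^p_w}$, then the Fatou/Egorov endgame of Theorem \ref{limsup} and approximation of $f_{r_0}$ via \cite[Theorem 6.2]{zhu2}), and your endgame, including the remark about discarding the lower-order terms and the use of the finiteness of $\mu$ to turn uniform approximation into norm approximation, is fine. But the justification you give for the crucial $\limsup$ step fails. On $r\mathbb{B}_n$ one has $r^2-|\zeta|^2\le 1-|\zeta|^2$, hence $(r^2-|\zeta|^2)^{-(n+1)}\ge(1-|\zeta|^2)^{-(n+1)}$: the function you propose as a dominating function is (up to the constant $C$) a \emph{minorant} of your integrand, not a majorant. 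The monotone decrease of $r\mapsto(r^2-|\zeta|^2)^{-(n+1)}$ means the integrand decreases \emph{towards} the limit integrand from above, so ``for $r$ close to $1$'' does not rescue the inequality at points with $|\zeta|$ close to $r$. Moreover, in the split form (keeping $(1-|\zeta|^2)^{pN}$ in the numerator and $(r^2-|\zeta|^2)^{-(n+1)}$ in the denominator) there is no $r$-independent integrable majorant at all: $\sup_{r_0\le r<1}\chi_{r\mathbb{B}_n}(\zeta)(r^2-|\zeta|^2)^{-(n+1)}=+\infty$ whenever $|\zeta|\ge r_0$, and even for fixed $r$ the split upper bound $\int_{r\mathbb{B}_n}\bigl|(1-|\zeta|^2)^N\partial^m f(\zeta)\bigr|^p w(\zeta)(r^2-|\zeta|^2)^{-(n+1)}\,dv(\zeta)$ can diverge (e.g.\ $w(z)=(1-|z|^2)^{n+2}$, which satisfies (\ref{con-main}) with $k=0$ and makes $\mu$ finite). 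So the classical dominated convergence theorem cannot be invoked with that majorant.

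The repair is to keep the geometric factors together, which is the opposite of what your last paragraph asserts. After the substitution the exact integrand is $\chi_{r\mathbb{B}_n}(\zeta)\,r^{2-pN}(r^2-|\zeta|^2)^{pN-n-1}\bigl|\partial^m f(\zeta)\bigr|^p w(\zeta/r)$. If $pN\ge n+1$, the exponent $pN-n-1$ is nonnegative, so $(r^2-|\zeta|^2)^{pN-n-1}\le(1-|\zeta|^2)^{pN-n-1}$, and together with $r^kw(\zeta/r)\le Cw(\zeta)$ this gives the genuine integrable majorant $C\,r_0^{-k-?}(1-|\zeta|^2)^{pN-n-1}|\partial^m f(\zeta)|^pw(\zeta)$ (integrable exactly because $f\in\mathcal{B}^p_w$); dominated convergence then yields the $\limsup$ inequality with constant $1$ (here, as in the paper, one is also tacitly using $w(\zeta/r)\to w(\zeta)$ a.e.\ to identify the pointwise limit -- otherwise one only gets the constant $C$, which is not enough for the Fatou/Egorov step). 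The borderline case $n<pN<n+1$ \emph{does} need separate treatment, contrary to your claim: there the exponent is negative, the comparison reverses, and neither a fixed majorant nor monotonicity in $r$ is available (the characteristic function destroys monotonicity near $|\zeta|=r$). The cleanest fix is to choose $N$ with $pN\ge n+1$ from the outset, which is legitimate since by \cite[Theorem 6.1]{zhu2} the space $\mathcal{B}^p$ does not depend on the choice of $N$; note that in the $p=2$ Dirichlet theorem this issue never arises because $2N>n$ with $N$ an integer forces $2N\ge n+1$.
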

\begin{proof}
We begin by inspecting the expression
$$\|f_r\|^p_{\mathcal{B}^p_w}=
 \sum_{|m|=N}\int_{\mathbb{B}_n}
\left |(1-|z|^2)^N \frac{\partial^m f_r}{\partial z^m}(z)\right |^p w(z)d\tau (z).$$
Similar to the preceding case, we have
\begin{align*}\|f_r\|^p_{\mathcal{B}^p_w}&= \sum_{|m|=N}\frac{1}{r^k}\int_{r\mathbb{B}_n}
\left |\Big (\frac{r^2-|z|^2}{r^2}\Big )^N r^{|m|} \frac{\partial^m f}{\partial z^m}(z)\right |^p r^k w\left (\frac{z}{r}\right )\frac{r^2}{(r^2-|z|^2)^{n+1}} dv(z)\\
&\le C\frac{r^{2+p|m|}}{r^{k+2pN}}\sum_{|m|=N}\int_{\mathbb{B}_n}
\left |\big (1-|z|^2\big )^N\frac{\partial^m f}{\partial z^m}(z)\right |^p w(z)\frac{dv(z)}{(r^2-|z|^2)^{n+1}}.
\end{align*}
Again, this argument shows that
$$\limsup_{r\to 1^-}\|f_r\|^p_{\mathcal{B}^p_w}\le \|f\|^p_{\mathcal{B}^p_w},$$
from which the result follows.
\end{proof}

\section{Besov spaces: alternative definition}
Another approach in the study of Besov spaces in several complex variables is to use the concept of radial derivative.
Let $f$ be a holomorphic function in the unit ball $\mathbb{B}_n$. The radial derivative of $f$ is defined by
$$Rf(z)=\sum_{k=1}^n z_k\frac{\partial f}{\partial z_k}(z).$$
According to \cite{zhu2}, if
\begin{equation}\label{homg-exp}
f(z)=\sum_{k=0}^\infty f_k(z)
\end{equation}
is the homogenous expansion of $f$ (each $f_k$ is a polynomial of degree $k$ in $z_1$,...,$z_n$), then
$$Rf(z)=\sum_{k=1}^\infty kf_k(z).$$
The weighted analytic Besov space $\mathbf{B}_w^p(\disk^n)$ consists of analytic functions $f$ on a neighborhood of polydisk for which the integral
$$\int_{\disk^n}\Big(\prod_{j=1}^n(1-|z_j|^2)^{p-2}\Big)|Rf(z)|^p w(z)dV(z)$$
is finite.
The norm of a function in $\mathbf{B}_w^p(\disk^n)$ is given by
\begin{equation*}
\| f\|^p_{\mathbf{B}^p_w(\disk^n)}=|f(0)|^p+\int_{\disk^n}\Big(\prod_{j=1}^n(1-|z_j|^2)^{p-2}\Big)|Rf(z)|^p w(z)dV(z).
\end{equation*}

\begin{theorem}\label{besov-polydisk-radial}
Let $2\le p<\infty$ and $d\mu(z)=w(z)dV(z)$ be a finite positive measure on the polydisk such that $w$ satisfies the condition (\ref{con-main}).
Then the polynomials are dense in the weighted analytic Besov space $\mathbf{B}_w^p(\disk^n)$.
\end{theorem}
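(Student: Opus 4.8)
The strategy mirrors the proof of Theorem \ref{limsup}, the only new ingredient being a careful treatment of the radial derivative under dilation. First I would observe that for $f_r(z)=f(rz)$, the chain rule gives $\frac{\partial f_r}{\partial z_k}(z)=r\,\frac{\partial f}{\partial z_k}(rz)$, so that $Rf_r(z)=\sum_k z_k r\,\frac{\partial f}{\partial z_k}(rz)=(Rf)(rz)$; equivalently, from the homogeneous expansion $f=\sum_k f_k$ one has $f_r=\sum_k r^k f_k$ and hence $Rf_r=\sum_k k r^k f_k=(Rf)_r$. Thus the radial derivative commutes with dilation, which is exactly what makes the argument go through. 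As in Theorem \ref{limsup}, the constant term $|f(0)|^p$ plays no role in the approximation, so it suffices to control the integral term $\int_{\disk^n}\big(\prod_j(1-|z_j|^2)^{p-2}\big)|Rf_r(z)|^p w(z)\,dV(z)$.

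The key step is the change of variables $z\mapsto z/r$ in this integral. Writing $g=Rf$, so that $Rf_r=g_r$, I get
\begin{align*}
\|f_r\|^p_{\mathbf{B}^p_w(\disk^n)}-|f(0)|^p
&=\int_{\disk^n}\Big(\prod_{j=1}^n(1-|z_j|^2)^{p-2}\Big)|g(rz)|^p w(z)\,dV(z)\\
&=\frac{1}{r^{2n}}\int_{r\disk\times\cdots\times r\disk}\Big(\prod_{j=1}^n\Big(\frac{r^2-|z_j|^2}{r^2}\Big)^{p-2}\Big)|g(z)|^p w\Big(\frac{z}{r}\Big)\,dV(z).
\end{align*}
Now I insert the factor $r^k/r^k$ and use the Mergelyan-type condition \eqref{con-main}: for $r_0\le r<1$ and $|z|<r$ we have $r^k w(z/r)\le C\,w(z)$. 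The remaining powers of $r$ are bounded below away from zero for $r$ near $1$, and, crucially, each factor $\big(\frac{r^2-|z_j|^2}{r^2}\big)^{p-2}$ is increasing in $r$ on $|z_j|<r$ because $p\ge 2$ (this is precisely where the hypothesis $2\le p<\infty$ enters — for $p<2$ the exponent $p-2$ is negative and monotonicity fails). Hence the integrand on $r\disk\times\cdots\times r\disk$ is dominated, up to the constant $C$ and harmless powers of $r$, by the fixed integrable function $\big(\prod_j(1-|z_j|^2)^{p-2}\big)|g(z)|^p w(z)$, which is finite because $f\in\mathbf{B}^p_w(\disk^n)$. The dominated convergence theorem then yields
$$\limsup_{r\to 1^-}\|f_r\|^p_{\mathbf{B}^p_w(\disk^n)}\le \|f\|^p_{\mathbf{B}^p_w(\disk^n)}.$$

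From here the argument is verbatim the one in Theorem \ref{limsup}: the $\limsup$ inequality combined with Fatou's lemma gives $\lim_{r\to1^-}\int_E(\prod_j(1-|z_j|^2)^{p-2})|g_r|^p w\,dV=\int_E(\cdots)|g|^p w\,dV$ for every measurable $E\subseteq\disk^n$; absolute continuity of the integral and Egorov's theorem (applied to the sequence $g_r\to g$, which converges pointwise since $g$ is holomorphic on $\disk^n$) produce a set off which the convergence is uniform; splitting $\int_{\disk^n}|g_r-g|^p(\cdots)w\,dV$ over $E$ and its complement and using the elementary inequality $|a-b|^p\le 2^p(|a|^p+|b|^p)$ shows $\lim_{r\to1^-}\|f_r-f\|_{\mathbf{B}^p_w(\disk^n)}=0$. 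Finally, each $f_r$ is holomorphic on a neighborhood of $\overline{\disk^n}$, hence is a uniform limit of polynomials there (its Taylor partial sums converge uniformly on $\disk^n$), and uniform approximation of $g_r=Rf_r$ together with $f_r(0)$ gives approximation in $\mathbf{B}^p_w(\disk^n)$ since $\mu$ is finite; composing the two approximations proves the theorem.

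The main obstacle — and the reason for the restriction $p\ge2$ — is the domination step: one needs the product of $\big((r^2-|z_j|^2)/r^2\big)^{p-2}$ over $j$ to be bounded by $\prod_j(1-|z_j|^2)^{p-2}$ uniformly for $r$ near $1$, and this monotonicity is only available when $p-2\ge0$. A secondary technical point is checking that the pointwise limit $g_r\to g$ needed for Egorov is legitimate on all of $\disk^n$; this is immediate from continuity of $g=Rf$ on the open polydisk.
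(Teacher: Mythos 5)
Your proposal is correct and follows essentially the same route as the paper: you note that $R$ commutes with dilation via the homogeneous expansion, change variables $z\mapsto z/r$, invoke condition (\ref{con-main}) together with the monotonicity of $\bigl((r^2-|z_j|^2)/r^2\bigr)^{p-2}$ (which is where $p\ge 2$ enters), and apply dominated convergence to get the $\limsup$ inequality. If anything, you are more complete than the paper, since you spell out the Fatou--Egorov splitting from Theorem \ref{limsup} that the paper only gestures at when passing from convergence of norms to $\|f_r-f\|_{\mathbf{B}^p_w(\disk^n)}\to 0$, and you justify the final polynomial approximation of $f_r$ explicitly.
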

\begin{proof}
As we pointed out earlier, the constant term in the definition of norm does not play any role in approximation, so that we just look at
 \begin{equation*}
\| f_r\|^p_{\mathbf{B}^p_w(\disk^n)}=\int_{\disk^n}\Big(\prod_{j=1}^n(1-|z_j|^2)^{p-2}\Big)|Rf_r(z)|^p w(z)dV(z).
\end{equation*}
We first note that by (\ref{homg-exp})
$$f_r(z)=f(rz)=\sum_{m=0}^\infty f_m(rz),$$
from which it follows that
$$(Rf_r)(z)=\sum_{m=0}^\infty mf_m(rz).$$
Replacing $z$ by $z/r$ we obtain
\begin{align*}
\| f_r\|^p_{\mathbf{B}^p_w(\disk^n)}&=\int_{\disk^n}\Big[\prod_{j=1}^n(1-|z_j|^2)^{p-2}\Big]|Rf_r(z)|^p w(z)dV(z)\\
&=\frac{1}{r^{k+2n}}\int_{r\disk\times \cdots \times r\disk}\Big[\prod_{j=1}^n\Big (\frac{r^2-|z_j|^2}{r^2}\Big )^{p-2}\Big]|Rf(z)|^p r^k w(\frac{z}{r})dV(z)\\
&\le \frac{C}{r^{k+2n}}\int_{r\disk\times \cdots \times r\disk}\Big[\prod_{j=1}^n\Big (\frac{r^2-|z_j|^2}{r^2}\Big)^{p-2}\Big]|Rf(z)|^p w(z)dV(z).
\end{align*}
Since $p-2\ge 0$, each of the functions
$$r\mapsto \Big(\frac{r^2-|z_j|^2}{r^2}\Big )^{p-2}$$
is increasing in $r$, so that we can apply the generalized version of dominated convergence theorem to get
$$\lim_{r\to 1^{-1}}\| f_r\|^p_{\mathbf{B}^p_w(\disk^n)}=\| f\|^p_{\mathbf{B}^p_w(\disk^n)}.$$
This latter implies that the dilations $f_r$ tend to $f$ in norm;
$$\| f-f_r\|^p_{\mathbf{B}^p_w(\disk^n)}=0,$$
which leads to the desired result.
\end{proof}

\section{Angular weights on polydisk}
In contrast to radial weights defined on the unit disk, we may consider weights that depend only on the argument of $z$; that is,
$w(re^{i\theta})=w(\theta)$. We may call such weights {\it angular weights}. For example, given $\alpha>0$,
$$w(z)=w(re^{i\theta})=(4\pi^2-\theta^2)^\alpha,\quad 0\le \theta <2\pi,$$
is an angular weight on the unit disk. Similarly, a positive continuous function $w$ on the polydisk $\disk^n$ is said to be angular if
$$w(r_1e^{i\theta_1},...,r_ne^{i\theta_n})=w(\theta_1,...,\theta_n),$$
meaning that $w$ just depends on the arguments of the components of $z\in\disk^n$.\\

\noindent{\bf Some notations}\\
For simplicity, given an $n$-tuple $\mathbf{r}=(r_1,...,r_n)$, we write $d\mathbf{r}=dr_1\cdots dr_n$, and for a given weight function $w$, we write $w(\mathbf{r})$ instead of $\omega(r_1,...,r_n)$. In the same manner, $w(\mathbf{\theta})$ is a short form for $w(\theta_1,...,\theta_n)$, and
$d\mathbf{\theta}$ stands for $d\theta_1\cdots d\theta_n$. The distinguished boundary of $\disk^n$ is denoted by $\mathbb{T}^n$, and $|m|=m_1+\cdots+m_n$ where $m=(m_1,...,m_n)$ is a multi-index consisting of non-negative integers.

\begin{proposition}\label{angular}
Let $w$ be an angular weight function on the polydisk that satisfies
$$\int_{\mathbb{T}^n}w(\mathbf{\theta})d\mathbf{\theta} <\infty.$$
Then the Taylor polynomials of each function in $A^2_w(\disk^n,dV_\alpha)$ tend to the same function in norm. In particular,
the polynomials are dense in $A^2_w(\disk^n,dV_\alpha)$.
\end{proposition}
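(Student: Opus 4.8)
The plan is to isolate the one genuinely new point, since the statement bundles together an easy assertion and a hard one. The ``in particular'' clause (density of the polynomials) is the soft half: an angular weight automatically satisfies the Mergelyan-type condition (\ref{con-main}) with $k=0$ and $C=1$, because $z/r$ has exactly the same arguments as $z$ and hence $w(z/r)=w(z)$. Thus Theorem~\ref{limsup} in the case $p=2$ applies verbatim and already yields both the norm-convergence of the dilations, $\|f_r-f\|_{A^2_w(\disk^n,dV_\alpha)}\to 0$ as $r\to 1^-$, and the density of the polynomials. The substantive content is that the \emph{Taylor} partial sums $S_Nf=\sum_{|m|\le N}a_mz^m$ of $f=\sum_m a_m z^m$ converge to $f$ in norm.

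First I would record the Hilbert-space structure. Writing $z_k=r_ke^{i\theta_k}$ and separating the radial and angular integrations, the monomials satisfy
\[
\langle z^m,z^{m'}\rangle_{A^2_w}=\Big(\prod_{k=1}^n\int_0^1 r^{m_k+m_k'+1}(1-r^2)^\alpha\,dr\Big)\int_{\mathbb{T}^n}e^{i(m-m')\cdot\mathbf{\theta}}\,w(\mathbf{\theta})\,d\mathbf{\theta}.
\]
When $w$ is radial the angular factor is a multiple of $\delta_{m,m'}$, the monomials are orthogonal, and $\|f\|^2=\sum_m|a_m|^2\|z^m\|^2$; the Taylor sums are then the orthogonal partial sums and converge by Parseval. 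To reduce the general case to a single estimate, I would exploit that dilation and truncation act only on the Taylor coefficients and therefore commute, $S_N(f_r)=(S_Nf)_r$, so that for $0<r<1$
\[
f-S_Nf=(f-f_r)+\bigl(f_r-S_Nf_r\bigr)+S_N(f_r-f).
\]
For fixed $r$ the middle term tends to $0$ as $N\to\infty$, since $f_r$ is holomorphic on a neighbourhood of $\overline{\disk^n}$ and its Taylor series converges absolutely and uniformly there, hence in the finite measure $d\mu$; the first term is small by the dilation convergence above. Everything thus reduces to controlling $S_N(f_r-f)$, that is, to the \textbf{uniform boundedness of the Taylor projections} $S_N\colon A^2_w(\disk^n,dV_\alpha)\to A^2_w(\disk^n,dV_\alpha)$: once $\sup_N\|S_N\|=:M<\infty$, a three-term argument (choose $r$ with $(1+M)\|f-f_r\|<\varepsilon$, then $N$ large with $\|f_r-S_Nf_r\|<\varepsilon$) gives $\|f-S_Nf\|\to 0$ for every $f$.

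I expect this uniform bound to be the genuine obstacle, and it is exactly where the angular structure must be used honestly. Via the torus action $U_{\mathbf{\phi}}f(z)=f(e^{i\phi_1}z_1,\dots,e^{i\phi_n}z_n)$, which is diagonal on monomials, $U_{\mathbf{\phi}}z^m=e^{im\cdot\mathbf{\phi}}z^m$, one can represent $S_N$ as an average of $U_{\mathbf{\phi}}$ against a one-sided Dirichlet kernel, so that the estimate $\sup_N\|S_N\|<\infty$ becomes a weighted-norm inequality for the associated Riesz (analytic) projection on $\mathbb{T}^n$ with weight $w$. This is the delicate step: because the monomials are genuinely non-orthogonal once $w$ is non-constant, the Parseval shortcut is unavailable, and controlling the projection uniformly in $N$ is governed by an $A_2$-type (Hunt--Muckenhoupt--Wheeden) balance condition on the angular weight. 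The bare integrability hypothesis $\int_{\mathbb{T}^n}w(\mathbf{\theta})\,d\mathbf{\theta}<\infty$ secures the dilation/density conclusion but does not by itself deliver this projection bound; I would therefore expect the proof of convergence of the Taylor polynomials themselves to hinge on (and to require) such additional control on $w$ precisely at the point where $S_N$ must be bounded.
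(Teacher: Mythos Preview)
Your route diverges from the paper's, and your skepticism is well placed. The paper's argument is precisely the Parseval shortcut you flag as unavailable: it asserts directly that for $f=\sum_m a_m z^m$,
\[
\|f\|^2_{A^2_w(\disk^n,dV_\alpha)}=\Big(\int_{\mathbb{T}^n}w(\mathbf{\theta})\,d\mathbf{\theta}\Big)\sum_{|m|\ge 0}|a_m|^2\gamma_m,
\]
with $\gamma_m$ purely radial constants, and then reads off $\|f-p_k\|^2\to 0$ from the tail of the series. But, exactly as you point out, this identity requires the monomials to be pairwise orthogonal in $A^2_w$, i.e.\ $\int_{\mathbb{T}^n}e^{i(m-m')\cdot\mathbf{\theta}}w(\mathbf{\theta})\,d\mathbf{\theta}=0$ for all $m\neq m'$ with non-negative entries; since every nonzero lattice point of $\mathbb{Z}^n$ arises as such a difference, this forces all nonzero Fourier coefficients of $w$ to vanish and hence $w$ to be constant on the torus. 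Already in dimension one with $w(\theta)=1+\cos\theta$ and $f(z)=1+z$ the two sides of the display disagree, so the paper's computation does not go through for a genuinely angular weight.

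Your handling of the density clause via $w(z/r)=w(z)$ and Theorem~\ref{limsup} is correct, and indeed the paper proves exactly that statement separately as Theorem~\ref{angular-main}. Your diagnosis of the Taylor-sum assertion---that it reduces to uniform boundedness of the truncation projections $S_N$, which via the torus action becomes a weighted Riesz-projection estimate on $\mathbb{T}^n$ governed by an $A_2$-type condition rather than by mere integrability of $w$---pinpoints precisely what the paper's argument is missing. In short: the ``in particular'' conclusion survives, but the stronger claim about convergence of the Taylor partial sums is not established by the paper's proof as written, and your outline correctly identifies the genuine obstruction.
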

\begin{proof}
Let $m=(m_1,...,m_n)$ be a multi-index, and
$$f(z)=\sum_{|m|\ge 0} a_mz^m=\sum_{|m|\ge 0} a_{m_1,...,m_n}z_1^{m_1}\cdots z_n^{m_n}$$
be a function in $A^2_w(\disk^n,dV_\alpha)$. Let $I^n$ stand for the $n$ times Cartesian product of the unit interval $[0,1]$.
Then using polar coordinates we have
\begin{align*}\|f\|^2_{A^2_w(\disk^n,dV_\alpha)}&=\int_{I^n}\int_{\mathbb{T}^n}\sum_{|m|\ge 0}|a_m|^2\prod_{k=1}^{n}\left(r_k^{2m_k}(1-r_k^2)^\alpha r_kdr_k\right )\, w(\mathbf{\theta}) d\mathbf{\theta}\\
&=\Big(\int_{\mathbb{T}^n}w(\mathbf{\theta}) d\mathbf{\theta}\Big )\sum_{|m|\ge 0}\frac{|a_m|^2\,\Gamma(m_1+1)\cdots\Gamma(m_n+1)(\Gamma(\alpha+1))^n}{2^n\Gamma(m_1+\alpha+1)\cdots\Gamma(m_n+\alpha+1)}.
\end{align*}
This implies that for $p_k(z)=\sum_{0\le |m|\le k}a_mz^m$ we have
$$\|f-p_k \|^2_{A^2_w(\disk^n,dV_\alpha)}=\Big(\int_{\mathbb{T}^n}w(\mathbf{\theta}) d\mathbf{\theta}\Big )\sum_{|m|\ge k+1}
\frac{|a_m|^2\,\Gamma(m_1+1)\cdots\Gamma(m_n+1)(\Gamma(\alpha+1))^n}{2^n\Gamma(m_1+\alpha+1)\cdots\Gamma(m_n+\alpha+1)}.$$
Now, we let $k$ tend to infinity, so that $p_k\to f$ in $A^2_w(\disk^n,dV_\alpha)$.
\end{proof}

This result can be generalized in the following way.
\begin{theorem}\label{angular-main}
Let $w$ be an angular weight function on the polydisk that satisfies
$$\int_{\mathbb{T}^n}w(\mathbf{\theta})d\mathbf{\theta} <\infty.$$
Then the polynomials are dense in $A^p_w(\disk^n,dV_\alpha),\, 0<p<\infty$.
\end{theorem}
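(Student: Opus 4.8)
The plan is to deduce this from Theorem \ref{limsup} rather than to push through the Fourier-series computation of Proposition \ref{angular}; that computation is genuinely special to $p=2$, since for $p\ne 2$ the integrand $|f|^p$ does not expand into a sum over monomials and the coefficients of the Taylor series no longer decouple. Instead I would simply check that an arbitrary angular weight $w$ with $\int_{\mathbb{T}^n}w(\mathbf{\theta})\,d\mathbf{\theta}<\infty$ satisfies both hypotheses of Theorem \ref{limsup}: that $d\mu_\alpha=w\,dV_\alpha$ is a finite positive measure on $\disk^n$, and that $w$ obeys the Mergelyan-type condition (\ref{con-main}). Once both are in place, Theorem \ref{limsup} gives density of the polynomials in $A^p(\disk^n,d\mu_\alpha)=A^p_w(\disk^n,dV_\alpha)$ for every $0<p<\infty$.

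For finiteness, I would pass to polar coordinates $z_k=r_ke^{i\theta_k}$, under which $dV_\alpha(z)=\prod_{k=1}^n(1-r_k^2)^\alpha r_k\,dr_k\,d\theta_k$, and use Fubini together with the fact that $w$ depends only on $(\theta_1,\dots,\theta_n)$ to factor
\[
\int_{\disk^n}w(z)\,dV_\alpha(z)
=\Big(\prod_{k=1}^n\int_0^1(1-r_k^2)^\alpha r_k\,dr_k\Big)\Big(\int_{\mathbb{T}^n}w(\mathbf{\theta})\,d\mathbf{\theta}\Big)
=\big(2(\alpha+1)\big)^{-n}\int_{\mathbb{T}^n}w(\mathbf{\theta})\,d\mathbf{\theta},
\]
which is finite because $\alpha\ge 0$ and by the standing hypothesis on $w$. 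Thus $\mu_\alpha$ is a finite positive measure, and the two norms coincide, so it is legitimate to quote Theorem \ref{limsup} for this space.

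The one substantive observation is that dividing $z$ by a positive real number $r$ leaves the arguments of the components unchanged, $\arg(z_k/r)=\arg z_k$ for each $k$. Since $w$ depends only on those arguments, this yields $w(z/r)=w(z)$ for every $r\in(0,1)$ and every $z$ for which $z/r\in\disk^n$ — in particular on the region $\{|z|<r\}$ appearing in (\ref{con-main}), where $|z_k/r|<1$ for all $k$. Hence (\ref{con-main}) holds with exponent $k=0$ and constant $C=1$, in fact with equality. Invoking Theorem \ref{limsup} now completes the argument.

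I do not expect a real obstacle here: the content of the theorem is precisely the remark that angular weights are invariant under the radial dilations $f\mapsto f_r$ used to manufacture the approximants, which is exactly what condition (\ref{con-main}) abstracts, so the general-$p$ case is no harder than the machinery already developed in Section \S2. The only points requiring a word of care are (i) that one applies (\ref{con-main}) on $\{|z|<r\}$, where $z/r$ genuinely lies in $\disk^n$ so that $w(z/r)$ is defined, and (ii) that no growth or decay estimate on $w$ beyond integrability over $\mathbb{T}^n$ is needed, since the constant in (\ref{con-main}) may be taken to be $1$.
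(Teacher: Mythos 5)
Your argument is correct and rests on the same key observation as the paper's own proof, namely that an angular weight is invariant under the dilation $z\mapsto z/r$ (since $\arg(z_k/r)=\arg z_k$), so that condition (\ref{con-main}) holds with $k=0$ and $C=1$; the paper re-runs the change-of-variables and limiting argument of Theorem \ref{limsup} in this special case, whereas you simply verify the hypotheses of Theorem \ref{limsup} and quote it, which is equally valid and a bit more economical. Your explicit polar-coordinate check that $\int_{\mathbb{T}^n}w(\mathbf{\theta})\,d\mathbf{\theta}<\infty$ makes $w\,dV_\alpha$ a finite measure is a point the paper leaves implicit but genuinely needs (both as a hypothesis of Theorem \ref{limsup} and for the final step approximating the dilate by a polynomial), so including it is a welcome addition.
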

\begin{proof}
Since $w$ is angular, we see that $w(z)=w(\frac{z}{r})$, so that
\begin{align*}\|f_r\|^p_{A^p_w(\disk^n,dV_\alpha)}&=\int_{\disk^n}|f_r(z)|^pw(z)\prod_{k=1}^n(1-|z_k|^2)^\alpha dV(z)\\
&= \frac{1}{r^{2n}}\int_{r\disk^n}|f(z)|^p w(\frac{z}{r})\prod_{k=1}^n\left(\frac{r^2-|z_k|^2}{r^2}\right)^\alpha dV(z)\\
&=\frac{1}{r^{2n}}\int_{r\disk^n}|f(z)|^p w(z)\prod_{k=1}^n\left(\frac{r^2-|z_k|^2}{r^2}\right)^\alpha dV(z).
\end{align*}
Again, we use $\alpha\ge 0$ and the fact that the function
$$r\mapsto \Big (\frac{r^2-|z_k|^2}{r^2}\Big )^\alpha,\quad |z_k|\le r,$$
is increasing in $r$ to obtain
$$\limsup_{r\to 1^-}\int_{\disk^n}|f_r(z)|^pw(z)dV_\alpha (z)=\int_{\disk^n}|f(z)|^pw(z)dV_\alpha(z),$$
As in the proof of Theorem \ref{limsup}, this equality leads to
$$\|f_r-f\|_{A^p_w(\disk^n,dV_\alpha)}=0,\quad r\to  1^-,$$
from which the result follows.
\end{proof}

We now consider weights that are products of a radial and an angular weight.
\begin{proposition}\label{radial-angular}
Let $w(r_1e^{i\theta_1},...,r_ne^{i\theta_n})=\omega(r_1,...,r_n)\nu(\theta_1,...,\theta_n),$
where $\omega$ and $\nu$ are two weights on $\disk^n$ satisfying
$$\int_{I^n} \Big[\prod_{k=1}^{n}(1-r_k^2)^\alpha r_k\Big ]\omega(\mathbf{r})d\mathbf{r}<\infty,\quad \int_{\mathbb{T}^n}\nu (\mathbf{\theta})d\mathbf{\theta}<\infty.$$
Then the Taylor polynomials of each function in $A^2_w(\disk^n,dV_\alpha)$ tend to the same function in norm.
In particular, the polynomials are dense
in $A^2_w(\disk^n,dV_\alpha)$.
\end{proposition}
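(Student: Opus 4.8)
The plan is to run the argument of Proposition \ref{angular} with the radial weight carried along. Since both the weight $w=\omega\nu$ and the reference measure $dV_\alpha$ factor into a radial part over $I^n$ and an angular part over $\mathbb{T}^n$, the angular factor $\nu$ can be absorbed into the integration over $\mathbb{T}^n$ while the radial factor $\omega$ stays inside the integration over $I^n$; the point is that the first hypothesis then plays exactly the role that the classical Mergelyan integrability condition plays for a radial weight, while the second hypothesis plays the role it already played in Proposition \ref{angular}.

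First I would fix $f(z)=\sum_{|m|\ge 0}a_m z^m$ in $A^2_w(\disk^n,dV_\alpha)$ and pass to polar coordinates $z_k=r_k e^{i\theta_k}$. Using $w(z)=\omega(\mathbf{r})\nu(\mathbf{\theta})$ and Fubini to separate the radial and angular integrations, and discarding the off-diagonal terms of $|f|^2=\sum_{m,m'}a_m\overline{a_{m'}}z^m\overline{z^{m'}}$ exactly as in the proof of Proposition \ref{angular}, I would arrive at
$$\|f\|^2_{A^2_w(\disk^n,dV_\alpha)}=\Big(\int_{\mathbb{T}^n}\nu(\mathbf{\theta})\,d\mathbf{\theta}\Big)\sum_{|m|\ge 0}|a_m|^2 c_m,\qquad c_m:=\int_{I^n}\Big[\prod_{k=1}^n r_k^{2m_k}(1-r_k^2)^\alpha r_k\Big]\omega(\mathbf{r})\,d\mathbf{r}.$$
Next I would observe that $c_m<\infty$ for every $m$: since $0\le r_k\le 1$ on $I^n$, we have $c_m\le\int_{I^n}\big[\prod_{k=1}^n(1-r_k^2)^\alpha r_k\big]\omega(\mathbf{r})\,d\mathbf{r}$, which is finite by the first assumption, while $\int_{\mathbb{T}^n}\nu\,d\mathbf{\theta}<\infty$ by the second; hence the displayed series converges. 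Applying the same computation to $f-p_N$, where $p_N(z)=\sum_{0\le|m|\le N}a_m z^m$ is the $N$-th Taylor polynomial, gives
$$\|f-p_N\|^2_{A^2_w(\disk^n,dV_\alpha)}=\Big(\int_{\mathbb{T}^n}\nu(\mathbf{\theta})\,d\mathbf{\theta}\Big)\sum_{|m|>N}|a_m|^2 c_m,$$
which is the tail of a convergent series and therefore tends to $0$ as $N\to\infty$. Thus the Taylor polynomials of $f$ converge to $f$ in norm, and in particular the polynomials are dense.

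The step requiring the most care is the first one: justifying the term-by-term integration that collapses the double sum to its diagonal and legitimises interchanging summation with the two integrations. I would handle this exactly as in the proof of Proposition \ref{angular} — Tonelli applied to the nonnegative diagonal terms, together with the orthogonality relation used there. Once that reduction is available, everything else — the polar-coordinate change of variables, the elementary bound $c_m\le\int_{I^n}[\prod_k(1-r_k^2)^\alpha r_k]\omega\,d\mathbf{r}$, and the tail estimate — is routine. One may note that the proposition is really the fusion of two facts one variable at a time: the classical Mergelyan condition $\int_0^1 r\,\omega(r)\,dr<\infty$ takes care of the radial direction and Proposition \ref{angular} takes care of the angular direction, and they glue together by Fubini precisely because the weight and the measure both factor.
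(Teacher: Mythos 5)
Your proposal reproduces the paper's own proof essentially verbatim: pass to polar coordinates, use the product structure $w=\omega\nu$ and Fubini to pull out $\int_{\mathbb{T}^n}\nu\,d\mathbf{\theta}$, reduce the norm to the diagonal sum $\sum_{|m|\ge 0}|a_m|^2 c_m$ with $c_m$ dominated by $\int_{I^n}\bigl[\prod_{k}(1-r_k^2)^\alpha r_k\bigr]\omega(\mathbf{r})\,d\mathbf{r}$, and conclude by letting the tail of the convergent series go to zero; this is exactly the computation in the paper, so in that sense there is nothing to add. The one point worth flagging is the step you yourself single out: the ``orthogonality relation used there'' is not actually available for a general angular weight, because the cross terms $\int_{\mathbb{T}^n} e^{i(m-m')\cdot\theta}\,\nu(\theta)\,d\mathbf{\theta}$ are Fourier coefficients of $\nu$ and vanish for all $m\ne m'$ only when $\nu$ is (a.e.) constant; for instance with $n=1$, $\nu(\theta)=1+\cos\theta$ and $f(z)=1+z$ the off-diagonal contribution is nonzero, so the identity $\|f\|^2=\bigl(\int_{\mathbb{T}^n}\nu\,d\mathbf{\theta}\bigr)\sum_{|m|\ge0}|a_m|^2c_m$ fails as stated. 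The paper's proof of Proposition \ref{angular} (and of this proposition) makes the same silent jump, so your argument is faithful to the source, but strictly speaking the collapse of $|f|^2$ to its diagonal is an unjustified step in both write-ups and would need either a genuine justification or an additional hypothesis on $\nu$.
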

\begin{proof}
A computation shows that
\begin{align*}\|f\|^2_{A^2_w(\disk^n,dV_\alpha)}&=\int_{I^n}\int_{\mathbb{T}^n}\sum_{|m|\ge 0}|a_m|^2\prod_{k=1}^{n}\left(r_k^{2m_k}(1-r_k^2)^\alpha r_kdr_k\right )\omega(\mathbf{r})\nu (\mathbf{\theta})d\mathbf{r}d\mathbf{\theta}\\
&=\Big(\int_{\mathbb{T}^n}\nu (\mathbf{\theta})d\mathbf{\theta}\Big)\sum_{|m|\ge 0}|a_m|^2 \int_{I^n} \Big[\prod_{k=1}^{n}r_k^{2m_k}(1-r_k^2)^\alpha r_k\Big ] \omega(\mathbf{r})d\mathbf{r}\\
&=\Big(\int_{\mathbb{T}^n}\nu (\mathbf{\theta})d\mathbf{\theta}\Big)\sum_{|m|\ge 0}|a_m|^2 \gamma_m,
\end{align*}
where
\begin{align*}\gamma_m &=\int_{I^n} \Big[\prod_{k=1}^{n}r_k^{2m_k}(1-r_k^2)^\alpha r_k\Big ]\omega(\mathbf{r})d\mathbf{r}\\
&\le \int_{I^n} \Big[\prod_{k=1}^{n}(1-r_k^2)^\alpha r_k\Big ]\omega(\mathbf{r})d\mathbf{r}<\infty.
\end{align*}
Now, for $p_k(z)=\sum_{0\le |m|\le k}a_mz^m$ we have
$$\|f-p_k \|^2_{A^2_w(\disk^n,dV_\alpha)}=\Big(\int_{\mathbb{T}^n}\nu (\mathbf{\theta})d\mathbf{\theta}\Big)\sum_{|m|\ge k+1}^{\infty}|a_m|^2\gamma_m,$$
which tends to zero as $k\to\infty$.
\end{proof}
\begin{theorem}\label{radial-angular-main}
Let $w(r_1e^{i\theta_1},...,r_ne^{i\theta_n})=\omega(r_1,...,r_n)\nu(\theta_1,...,\theta_n),$
where $\omega$ and $\nu$ are two weights on $\disk^n$ satisfying
$$\int_{I^n} \Big[\prod_{k=1}^{n}(1-r_k^2)^\alpha r_k\Big ]\omega(\mathbf{r})d\mathbf{r}<\infty,\quad \int_{\mathbb{T}^n}\nu (\mathbf{\theta})d\mathbf{\theta}<\infty,$$
and $s^k\omega(r/s)\le C\omega(r)$ for some integer $k\ge 0$.
Then the polynomials are dense in the weighted Bergman space $A^p_w(\disk^n,dV_\alpha),\, 0<p<\infty$.
\end{theorem}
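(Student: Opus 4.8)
The plan is to deduce the theorem from Theorem \ref{limsup}: I will check that the product weight $w=\omega\cdot\nu$ is itself of Mergelyan type, i.e. that it satisfies condition \eqref{con-main}, and that $d\mu_\alpha(z)=w(z)\,dV_\alpha(z)$ is a finite positive measure on $\disk^n$. Once these two points are in place the conclusion is immediate from Theorem \ref{limsup}.

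First I would record the finiteness of the measure. Writing $z_k=r_ke^{i\theta_k}$ and passing to polar coordinates in each variable, Tonelli's theorem (the integrand is non-negative) lets one split
\[
\int_{\disk^n}w(z)\,dV_\alpha(z)=\Big(\int_{\mathbb{T}^n}\nu(\mathbf{\theta})\,d\mathbf{\theta}\Big)\Big(\int_{I^n}\Big[\prod_{k=1}^{n}(1-r_k^2)^\alpha r_k\Big]\omega(\mathbf{r})\,d\mathbf{r}\Big),
\]
which is finite by the two integrability hypotheses on $\nu$ and $\omega$. In particular $\mu_\alpha(\disk^n)<\infty$, so the polynomials belong to $A^p_w(\disk^n,dV_\alpha)$ and the finite-measure step used at the end of the proof of Theorem \ref{limsup} is available. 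Next I would verify \eqref{con-main} for $w$. The key (and essentially only) observation is that the angular part is invariant under the dilations: since $\nu$ depends only on the arguments of the coordinates and $z_k/r=(r_k/r)e^{i\theta_k}$ has the same argument as $z_k$, we have $\nu(z/r)=\nu(z)$. Hence, for $r$ close to $1$ and $|z|<r$,
\[
r^k\,w\Big(\frac{z}{r}\Big)=r^k\,\omega\Big(\frac{|z_1|}{r},\dots,\frac{|z_n|}{r}\Big)\,\nu(z)\le C\,\omega(|z_1|,\dots,|z_n|)\,\nu(z)=C\,w(z),
\]
which is exactly the radial Mergelyan hypothesis $s^k\omega(r/s)\le C\omega(r)$ applied with $s$ the dilation parameter. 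Thus $w$ satisfies \eqref{con-main} with the same non-negative integer $k$.

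With these two facts, Theorem \ref{limsup} applies verbatim to $d\mu_\alpha=w\,dV_\alpha$ and yields that the polynomials are dense in $A^p(\disk^n,d\mu_\alpha)=A^p_w(\disk^n,dV_\alpha)$ for every $0<p<\infty$. (If a self-contained argument is preferred, one can instead rerun the dilation computation directly: after the substitution $z\mapsto z/r$ the factor $\nu$ is untouched, the displayed bound controls $r^k\omega(z/r)$, and the factors $(r^2-|z_k|^2)^\alpha/r^{2\alpha}$ are dominated just as in the proofs of Theorems \ref{limsup} and \ref{angular-main}; then the dominated-convergence and Egorov argument gives $\|f_r-f\|_{A^p_w}\to 0$, while each $f_r$, being holomorphic on a larger polydisk, is approximated uniformly by a polynomial.) I do not expect a real obstacle here: the whole content is the remark that dilation-invariance of the angular part combines with the radial Mergelyan condition to reproduce \eqref{con-main} for the full weight. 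The only points needing slight care are the use of Tonelli rather than Fubini in the splitting above and, when $0<p<1$, working with the quantity $\|f-g\|^p$ exactly as in Theorem \ref{limsup}.
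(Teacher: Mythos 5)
Your proposal is correct and amounts to the same argument as the paper's: both rest on the observation that $\nu$ is invariant under the dilation $z\mapsto z/s$ while the radial Mergelyan hypothesis controls $s^k\omega(\mathbf{r}/s)$, together with finiteness of the measure. The only difference is packaging — you verify that $w=\omega\nu$ itself satisfies condition \eqref{con-main} and invoke Theorem \ref{limsup} as a black box, whereas the paper reruns the change-of-variables and monotonicity computation directly; your reduction is, if anything, slightly cleaner.
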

\begin{proof}
Again, we see that for $z=(r_1e^{i\theta_1},...,r_ne^{i\theta_n})$,
\begin{align*}\|f_s\|^p_{A^p_w(\disk^n,dV_\alpha)}&=\int_{\disk^n}|f_s(z)|^p\omega(r)\nu(\theta)\prod_{j=1}^n(1-|z_j|^2)^\alpha dv(z)\\
&= \frac{1}{s^{2n+k}}\int_{s\disk^n}|f(z)|^p s^k\omega(r/s)\prod_{j=1}^n\left(\frac{s^2-|z_j|^2}{s^2}\right)^\alpha\nu(\theta)dv(z)\\
&\le \frac{C}{s^{2n+k}}\int_{s\disk^n}|f(z)|^p \omega(r)\prod_{j=1}^n\left(\frac{s^2-|z_j|^2}{s^2}\right)^\alpha\nu(\theta)dv(z).\\
\end{align*}
Using the fact that for $\alpha\ge 0$ and $j$ fixed,
$$s\mapsto \Big (\frac{s^2-|z_j|^2}{s^2}\Big )^\alpha,\quad |z_j|\le s,$$
is increasing in $s$ we obtain
$$\limsup_{s\to 1^-}\|f_s\|^p_{A^p_w(\disk^n,dV_\alpha)}\le \|f\|^p_{A^p_w(\disk^n,dV_\alpha)}.$$
We now use the method introduced in the proof of Theorem \ref{limsup} to get
$$\|f_s-f\|_{A^p_w(\disk^n,dV_\alpha)}=0,\quad s\to 1^-,$$
which in turn leads to the desired result.
\end{proof}







\section{Declarations}

\noindent{\bf Ethical approval}\\
Not allpicable\\

\noindent{\bf Competing interests}\\
The author declares no competing interests.\\

\noindent{\bf Authors contribution}\\
Not allpicable\\

\noindent{\bf Funding}\\
Not allpicable\\

\noindent{\bf Availability of data and materials} \\
Data sharing is not applicable to this article as no data sets were generated or analyzed during the current study.

\end{document}